\author[P.~Leonetti]{Paolo Leonetti}
\address{
Universit\`a degli Studi dell'Insubria, via Monte Generoso 71, Varese 21100, Italy}
\email{leonetti.paolo@gmail.com}
\keywords{Continuous subadditive maps; Gaussian perturbations; $a$-subadditivity.}
\subjclass[2020]{Primary: 39B05, 39B62.
Secondary: 26A51, 26A99. 
}
\title{Can a small Gaussian perturbation break subadditivity?}
\newcommand{\vertiii}[1]{{\left\vert\kern-0.25ex\left\vert\kern-0.25ex\left\vert #1 
    \right\vert\kern-0.25ex\right\vert\kern-0.25ex\right\vert}}
   \def\MR#1{}
\newtheorem{thm}{Theorem}[section]
\newtheorem{lem}[thm]{Lemma}
\newtheorem{prop}[thm]{Proposition}
\theoremstyle{definition} 
\newtheorem{defi}[thm]{Definition}
\let\olddefi\defi
\renewcommand{\defi}{\olddefi\normalfont}
\let\oldexample\example
\renewcommand{\example}{\oldexample\normalfont}
\newtheorem{question}[thm]{Question}
\let\oldquestion\question
\renewcommand{\question}{\oldquestion\normalfont}
\newtheorem{rmk}[thm]{Remark}
\let\oldrmk\rmk
\renewcommand{\rmk}{\oldrmk\normalfont}
\theoremstyle{remark}
\newtheorem{claim}{\textsc{Claim}}
\newtheorem*{claim*}{\textsc{Claim}}
\providecommand{\MR}[1]{}
\providecommand{\MR}{\relax\ifhmode\unskip\space\fi MR }
\begin{document}

\maketitle
\thispagestyle{empty}

\begin{abstract}
\noindent Given an integer $a\ge 1$, a function $f: \mathbb{R}\to \mathbb{R}$ is said to be $a$-subadditive if 
$$
f(ax+y) \le af(x)+f(y) \,\,\,\text{ for all }x,y \in \mathbb{R}.
$$
Of course, $1$-subadditive functions (which correspond to ordinary subadditive functions) are $2$-subadditive. 
Answering a question of Matkowski, we show that there exists a continuous function $f$ satisfying $f(0)=0$ which is $2$-subadditive but not $1$-subadditive. 
In addition, the same example is not $3$-subadditive, which shows that the sequence of families of continuous $a$-subadditive functions passing through the origin is not increasing with respect to $a$. The construction relies on a perturbation of a given subadditive function with an even Gaussian ring, which will destroy the original subadditivity while keeping the weaker property. 

Lastly, given a positive rational cone $H\subseteq (0,\infty)$ which is not finitely generated, we prove that there exists a subadditive bijection $f:H\to H$ such that $\liminf_{x\to 0}f(x)=0$ and $\limsup_{x\to 0}f(x)=1$. This is related an open question of Matkowski and {\'S}wi{\k a}tkowski in [Proc.~Amer.~Math.~Soc.~\textbf{119} (1993), 187--197].
\end{abstract}

\section{Introduction and Main Results}\label{sec:intr}





Subadditive functions play an important role in many branches of mathematics, including applications in the theory of convex sets, uniqueness of differential equations, and theory of semigroups, see e.g. \cite{MR2431038, MR137800, MR1316490, MR1234992, MR680171, MR4811070}, cf. \cite[Chapter 7]{MR423094} and references therein. 

In this work, we study the existence of certain real-valued functions which are [slightly not-]subadditive. To be more precise, given an integer $a \ge 1$ (or, more generally, a real $a>0$), a function $f: \mathbb{R}\to \mathbb{R}$ is said to be $a$\emph{-subadditive} if 
\begin{equation}\label{eq:asubadditive}
\forall x,y \in \mathbb{R}, \quad \quad 
f(ax+y)\le af(x)+f(y). 
\end{equation}
In particular, $1$-subadditive functions are the ordinary subadditive functions. The families of $a$-subadditive functions 
have been studied in \cite{MR1158730, MR3083197, MR1316490}.
It turns out that properties of $a$-subadditive functions and related notions have been useful in results such as characterizations of $L_p$-norm-like functions and commutativity of certain equivalents, see e.g. \cite{MR3720973, MR1009994, MR1113646, MR3061390}. 

\begin{defi}\label{defi:Sa}
    For each $a>0$, let $\mathcal{S}_a$ be the family of $a$-subadditive continuous functions $f: \mathbb{R}\to \mathbb{R}$ such that $f(0)=0$. 
\end{defi}

It is immediate to see that $1$-subadditive functions are $2$-subadditive.  
This implies that $$\mathcal{S}_1\subseteq \mathcal{S}_2$$ and, with the same reasoning, $\mathcal{S}_1\subseteq \mathcal{S}_a$ for every integer $a\ge 2$. 
More generally, we have the following straightforward result (its proof is omitted). 
\begin{lem}\label{lem:basic}
    Fix $a_1,\ldots,a_k>0$ and suppose that $f: \mathbb{R}\to \mathbb{R}$ is $a_i$-subadditive for each $i=1,\ldots,k$. Then $f$ is $a$-subadditive for all $a$ in the additive semigroup generated by $\{a_1,\ldots,a_k\}$. 
\end{lem}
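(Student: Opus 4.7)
The plan is to reduce everything to a single two-element step: if $f$ is both $b$-subadditive and $c$-subadditive, then $f$ is $(b+c)$-subadditive. Granted this, an immediate induction on the number of summands will establish the claim for every element of the additive semigroup generated by $\{a_1,\ldots,a_k\}$, since every such element can be written as $a_{i_1}+a_{i_2}+\cdots+a_{i_m}$ with indices from $\{1,\ldots,k\}$ (repetitions allowed).

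To prove the two-element step, I would exploit the natural factorization $(b+c)x+y = bx + (cx+y)$. Applying $b$-subadditivity with the pair $(x,cx+y)$ in the defining inequality \eqref{eq:asubadditive}, and then $c$-subadditivity with the pair $(x,y)$, I obtain
\[
f\bigl((b+c)x+y\bigr) \;=\; f\bigl(bx + (cx+y)\bigr) \;\le\; b f(x) + f(cx+y) \;\le\; b f(x) + c f(x) + f(y) \;=\; (b+c)f(x) + f(y),
\]
valid for all $x,y\in\mathbb{R}$, which is exactly $(b+c)$-subadditivity. Combining this with the hypothesis via induction on $m$ closes the argument.

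There is essentially no obstacle here: the statement holds for arbitrary $f:\mathbb{R}\to\mathbb{R}$ without any use of continuity or the normalization $f(0)=0$, and the decomposition of $(b+c)x+y$ is forced by the asymmetric shape of the $a$-subadditivity inequality. The only mild point to notice is that the semigroup in question is the additive, not the multiplicative, semigroup — this is precisely what makes the splitting $bx+(cx+y)$ work, whereas no comparable identity is available for products of the $a_i$'s.
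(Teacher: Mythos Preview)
Your argument is correct. The paper in fact omits the proof of this lemma, calling it straightforward; your decomposition $(b+c)x+y = bx + (cx+y)$ together with induction on the number of summands is the natural argument the authors presumably had in mind.
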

As shown in \cite[Example 2]{MR3083197}, the function $f$ defined by 
$$
\forall x \in \mathbb{R}, \quad 
f(x):=1+2\cdot \bm{1}_{\mathbb{Q}}(x)
$$
is $2$-subadditive and not $1$-subadditive. 
Motivated by the nice regularity properties of $a$-subadditive functions, Janusz Matkowski asked 
in \cite[p. 53]{MR3083197} 
whether an analogue example exists under the mild regularity conditions given in Definition \ref{defi:Sa}. 
It is worth remarking that the same question has been posed during the open problem sessions of the 49th International Symposium on Functional Equations (Austria, 2011) and of the 57th International Symposium on Functional Equations (Poland, 2019): 
\begin{question}\label{q:matk1}
    Is it true that $\mathcal{S}_1=\mathcal{S}_2$? 
\end{question}

In addition, Matkowski asked also whether the net $(\mathcal{S}_a: a>0)$ is increasing, cf. \cite[p. 56]{MR3083197}: 
\begin{question}\label{q:matk2}
    Is it true that $\mathcal{S}_a\subseteq \mathcal{S}_b$ for all reals $0<a< b$?
\end{question}

Our main result answers in the negative both Question \ref{q:matk1} and Question \ref{q:matk2}: 
\begin{thm}\label{thm:mainmatk1}
$\mathcal{S}_2\setminus \mathcal{S}_3\neq \emptyset$. In particular, $\mathcal{S}_1\neq \mathcal{S}_2$ and $(\mathcal{S}_a: a>0)$ is not increasing.
\end{thm}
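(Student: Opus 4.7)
My approach is to build $f = f_0 + g$ as the sum of a fixed continuous, strictly subadditive function $f_0:\mathbb{R}\to\mathbb{R}$ with $f_0(0)=0$ (say $f_0(x)=\sqrt{|x|}$) and a small even ``Gaussian ring'' perturbation $g$ concentrated near a symmetric pair of points $\pm c$, with amplitude, width and a mild correction tuned so that $g(0)=0$; then $f$ is automatically continuous with $f(0)=0$. Writing $D_a(x,y) := a f_0(x) + f_0(y) - f_0(ax+y)$, which is nonnegative for $a\in\{1,2,3\}$ since $f_0$ is subadditive, the $a$-subadditivity of $f$ at $(x,y)$ is precisely the scalar inequality $g(ax+y) - a g(x) - g(y) \le D_a(x,y)$. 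The task is thus to exploit the different landscapes of $D_1, D_2, D_3$ together with the localization of the ring so as to satisfy this inequality for $a=2$ at every pair while failing it strictly for $a=1$ and $a=3$ at explicit witness pairs.

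For $2$-subadditivity, a case analysis on whether $2x+y$, $x$, and $y$ lie close to a peak $\pm c$ reduces the estimate to the single bound $\epsilon \le \inf D_2(x_2,y_2)$, taken over ``Case~A'' pairs with $2x_2 + y_2 = \pm c$ and both $x_2, y_2$ outside the ring (where $\epsilon = g(c)$ is the peak amplitude); in the remaining configurations partial cancellation on the left-hand side via $g(x_2)$ or $g(y_2)$ is absorbed by the slack of $f_0$. For the failure of $1$-subadditivity one exhibits an explicit witness $(x_1, y_1)$ with $x_1 + y_1 = c$ and both $x_1, y_1$ outside the ring, satisfying $D_1(x_1, y_1) < \epsilon$. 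A key structural observation is that one cannot take the symmetric split $(c/2,c/2)$, whose slack $D_1(c/2,c/2) = 2f_0(c/2) - f_0(c)$ equals $D_2(c/2,0)$ --- the very quantity capping $\epsilon$ from above --- so one uses an asymmetric split such as $(c/3, 2c/3)$, whose $D_1$ is strictly smaller for concave $f_0$, opening a nonempty window of admissible~$\epsilon$.

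The failure of $3$-subadditivity for the same $f$ is the heart of the proof and is organized around the identity
\[
D_3(x,y) = D_2(x, x+y) + D_1(x,y),
\]
which ties any would-be $3$-subadditivity violation at $(x_3, y_3)$ to a $2$-subadditivity constraint at the paired point $(x_3, x_3+y_3)$. The resolution is to choose $(x_3,y_3)$ so that the sum $x_3+y_3$ lies \emph{inside} the effective support of the ring, relaxing the paired $2$-constraint (since then $g(x_3+y_3) > 0$ appears on its right-hand side), and then to find a narrow band of $x_3$-values in which the ring's drop $g(x_3+y_3)-g(y_3)$ exceeds the local size of $D_1(x_3, y_3)$. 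A smooth Gaussian profile is essential here: its quadratic-in-distance drop near the peak beats the $|x|^p$ growth of $D_1$ in a small window, whereas a piecewise-linear bump turns out to be marginally insufficient. The main obstacle is matching the single upper bound on $\epsilon$ from $2$-subadditivity against the two lower bounds from the $1$- and $3$-violations, all arising at distinct critical points of the slack fields; once a compatible triple $(c, \sigma, \epsilon)$ is found, a quantitative one-variable analysis verifies all inequalities. Finally, the ``in particular'' clauses of Theorem~\ref{thm:mainmatk1} follow from Lemma~\ref{lem:basic}: any $1$-subadditive function is $3$-subadditive (since $3$ lies in the additive semigroup generated by $1$), so the failure of $3$-subadditivity forces the failure of $1$-subadditivity, yielding $\mathcal{S}_1 \subsetneq \mathcal{S}_2$, and $\mathcal{S}_2 \not\subseteq \mathcal{S}_3$ is exactly the failure of monotonicity.
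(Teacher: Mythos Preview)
Your strategy coincides with the paper's: perturb a strictly subadditive base by a small even Gaussian ring, choose parameters so that $2$-subadditivity survives, and then exhibit a pair at which $3$-subadditivity fails. The paper uses $g(x)=|x|+\log(1+|x|)$ rather than $\sqrt{|x|}$, and organizes the $2$-subadditivity check by a geometric decomposition of $\mathbb{R}^2$ into three regions $\mathscr{A},\mathscr{B},\mathscr{C}$ (large $|x|$; small $2|x|+|y|$; the remaining strip), culminating in explicit parameter conditions (Theorem~\ref{prop:mainS2}) and a concrete triple $(\mu,\sigma,\alpha)=(1.2,\,0.05,\,0.05)$ with a numerical $3$-witness. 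Your telescoping identity $D_3(x,y)=D_2(x,x+y)+D_1(x,y)$ is a nice heuristic for locating the $3$-witness that the paper does not make explicit, and your observation that the symmetric split $(c/2,c/2)$ cannot serve as a $1$-witness because $D_1(c/2,c/2)=D_2(c/2,0)$ is genuinely illuminating.

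That said, as written the proposal is a plan rather than a proof, and the gap sits precisely where the paper does most of its work. Your reduction of $2$-subadditivity to ``Case~A'' pairs (those with $2x+y=\pm c$ and $x,y$ outside the ring) plus a blanket ``partial cancellation absorbed by slack'' for the rest is not justified. The delicate regime is small $|x|$ with $y$ near the peak: with $2x+y=c$ and $y=c-2x$ just \emph{inside} the ring, the left side $g(2x+y)-2g(x)-g(y)$ is of order $\epsilon\,(x/\sigma)^2$ while $D_2(x,y)\sim 2\sqrt{x}$, so the comparison is a genuine two-scale estimate, not an automatic cancellation; the paper handles the analogous regime via a second-order interpolation lemma (Lemma~\ref{lem:atkinson2}) and a concavity argument (Proposition~\ref{prop:smallregime}), and a parallel quantitative step would be needed for $\sqrt{|x|}$. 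Moreover, you never produce a compatible triple $(c,\sigma,\epsilon)$ nor carry out the promised ``quantitative one-variable analysis'', and since the admissible window is narrow (the $3$-violation forces $\epsilon$ up while the infimum in your Case~A forces it down to order $\sqrt{\sigma}$), existence of such a triple is exactly what must be demonstrated. Finally, the separate $1$-subadditivity witness is redundant: as you note at the end, Lemma~\ref{lem:basic} already gives $\mathcal{S}_1\subseteq\mathcal{S}_3$, so $f\notin\mathcal{S}_3$ implies $f\notin\mathcal{S}_1$.
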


The proof of Theorem \ref{thm:mainmatk1} will be given in Section \ref{sec:proof1}. 
As it will be clear from the proof, 
the constructed function $f \in \mathcal{S}_2\setminus \mathcal{S}_3$ will be, in addition, even and differentiable at every nonzero $x\in \mathbb{R}$. 
The idea of the construction is the following: we will pick a function $f=g+\alpha\,(h-h(0))$, where $g$ is a \textquotedblleft safely\textquotedblright\, subadditive function (hence, $2$-subadditive), and the perturbation $\alpha\,(h-h(0))$ is small enough not to destroy $2$-subadditivity but large enough to violate $3$-subadditivity at a well-chosen pair $(x,y)$. The map $h$ will be a even ``Gaussian ring'' peaked at $|x|=\mu$ with width $\sigma$ of the type
$$
h(x)=e^{-\left(\frac{|x|-\mu}{\sigma}\right)^2},
$$
for some $\mu,\sigma>0$. 
Subtracting $h(0)$ merely normalizes so that the perturbations vanishes at the origin and does not alter gaps from $g$ except by constants. 

\medskip

To state our second result, we recall that a subadditive bijection $f: (0,\infty)\to (0,\infty)$ with $\lim_{x\to 0}f(x)=0$ has to be an homeomorphism of $(0,\infty)$, see \cite[Corollary 2]{MR1176072} and cf. \cite{MR1088646}. On the other hand, among the discontinuous examples in this setting, Matkowski and {\'S}wi{\k a}tkowski proved in \cite[Theorem 2]{MR1176072} that there exists a subadditive bijection $f: (0,\infty)\to (0,\infty)$ such that 
$$
\liminf_{x\to 0}f(x)>0
\quad \text{ and }\quad 
\limsup_{x\to 0}f(x)<\infty, 
$$
cf. also \cite[p. 194]{MR1176072}. 
They also remark that \cite[Example 1]{MR1088646} shows the existence of a subadditive bijection $f: (0,\infty)\to (0,\infty)$ such that 
$$
\liminf_{x\to 0}f(x)=0 \quad \text{and }\quad \limsup_{x\to 0}f(x)=\infty. 
$$
Accordingly, they state the following open question: 
\begin{question}\label{q:matk2}
Does there exist a subadditive bijection $f: (0,\infty)\to (0,\infty)$ such that 
$$
\liminf_{x\to 0}f(x)=0 \quad \text{and }\quad 0<\limsup_{x\to 0}f(x)<\infty\,\,?
$$
\end{question}
In \cite[Theorem 3]{MR1176072}, they observed that if $f: (0,\infty)\to (0,\infty)$ is a subadditive bijection and $f^{-1}$ is bounded in a neighborhood of $0$ then $\lim_{x\to 0}f^{-1}(0)=0$. In light of this result, they concluded that Question \ref{q:matk2} ``seems to be rather difficult to decide.'' 

Although we do \emph{not} have a final answer to Question \ref{q:matk2}, we can show the answer is affirmative once we replace $(0,\infty)$ with the positive rational cone $H\subseteq (0,\infty)$ (that is, a subset which is stable under finite sums and multiplications by positive rationals) with an infinite set of generators which are linearly independent over $\mathbb{Q}$. 
\begin{thm}\label{thm:mainmatk2}
Let $H$ be a positive rational cone generated by a $\mathbb{Q}$-linearly independent infinite subset of $(0,\infty)$. 
Then there exists a subadditive bijection $f: H\to H$ such that 
\begin{equation}\label{eq:claimmatk}
\liminf_{x\to 0}f(x)=0 \quad \text{and }\quad \limsup_{x\to 0}f(x)=1. 
\end{equation}
\end{thm}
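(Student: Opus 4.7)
The plan is to construct $f$ via a back-and-forth enumeration of $H$, exploiting the freeness of $H$ as a positive rational semimodule over its infinite $\mathbb{Q}$-linearly independent basis $B=\{b_n\}_{n\geq 1}$. First I normalize: since rescaling each $b_n$ by a positive rational preserves both the $\mathbb{Q}$-linear independence and $H$, I may assume $b_n<2^{-n}$, and in particular $\mathbb{Q}_{>0}\cdot b_1\subseteq H$ is dense in $(0,\infty)$. Using this density I fix sequences $(u_k),(v_k)\subseteq H$ with $u_k,v_k\to 0$, together with targets $\alpha_k\in H\cap(1-2^{-k},1)$ and $\beta_k\in H\cap(0,2^{-k})$. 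The eventual commitments $f(u_k)=\alpha_k$ and $f(v_k)=\beta_k$ will force $\limsup_{x\to 0}f\geq 1$ and $\liminf_{x\to 0}f=0$.

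Next I enumerate $H=\{h_1,h_2,\ldots\}$ and build $f$ inductively as the union of finite partial bijections $f_n\colon D_n\to R_n$, each subadditive in the sense that $f_n(x+y)\leq f_n(x)+f_n(y)$ whenever $x,y,x+y\in D_n$. At each stage I extend by the usual back-and-forth to place $h_n$ into both $D_{n+1}$ and $R_{n+1}$, interleaved with the diagonal commitments above. The key technical observation is that when assigning $f(h^\ast)=y^\ast$ for a new $h^\ast\notin D_n$, subadditivity imposes only finitely many inequalities of the form $y^\ast\leq f(u)+f(v)$ (one for each decomposition $h^\ast=u+v$ with $u,v\in D_n$) and $y^\ast\geq f(u)-f(v)$ (one for each relation $u=h^\ast+v$ with $u,v\in D_n$, plus the symmetric constraints when $h^\ast+u\in D_n$). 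These define a nonempty real interval $[L^\ast,U^\ast]$, which meets $H\setminus R_n$ in an infinite set because $H$ is dense in $(0,\infty)$.

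The main obstacle, and the heart of the argument, is maintaining the global upper envelope $f(x)\leq 1+o(1)$ as $x\to 0$ while requiring $f$ to be surjective onto $H$ (and hence to attain arbitrarily large values; this is what precludes the tempting additive extension $f(\sum q_ib_i)=\sum q_ic_ib_{\sigma(i)}$, which one checks can give only $\limsup\in\{0,\infty\}$). I handle this by enforcing the invariant that $f(x)>1+2^{-k}$ may be committed only to arguments $x>\eta_k$, for a prescribed sequence $\eta_k\to 0$, so that small arguments never receive values exceeding $1$ by more than a vanishing tolerance. Verifying compatibility with subadditivity requires careful inductive bookkeeping: if a lower-bound constraint $L^\ast=\max\{f(u)-f(v):u=h^\ast+v,\;u,v\in D_n\}$ threatens to exceed $1+\epsilon$ at some small $h^\ast$, then some earlier $f(u)$ must have been assigned a large value, which by the reservation forces $u$ itself to be bounded below, whence $v=u-h^\ast$ is also bounded below and its $f$-value was — by the induction hypothesis and a judicious earlier choice — selected large enough to keep $L^\ast$ within tolerance. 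Once this is verified, the union of the $f_n$'s yields the desired $f$: bijectivity and subadditivity hold by construction, $\limsup_{x\to 0}f=1$ follows from the commitments $f(u_k)=\alpha_k\to 1$ combined with the envelope, and $\liminf_{x\to 0}f=0$ follows from $f(v_k)=\beta_k\to 0$.
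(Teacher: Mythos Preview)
Your back-and-forth scheme has a genuine gap at its ``key technical observation''.  You assert that the constraint interval $[L^\ast,U^\ast]$ is always nonempty once the partial map $f_n$ is subadditive on $D_n$, but this is false.  Take $D_n=\{b_1,b_2,b_3,\,w\}$ with $w:=b_1+b_2+b_3$ (three distinct basis elements of $B$).  No pairwise sum of elements of $D_n$ lies in $D_n$, so \emph{every} assignment of positive values is vacuously subadditive on $D_n$; in particular you may set $f_n(b_i)=\varepsilon$ for some small $\varepsilon\in H$ and $f_n(w)=M$ for some large $M\in H$.  Now try to add $h^\ast:=b_1+b_2$.  The decomposition $h^\ast=b_1+b_2$ forces $y^\ast\le 2\varepsilon$, while the relation $w=h^\ast+b_3$ forces $y^\ast\ge M-\varepsilon$; for $M>3\varepsilon$ the interval is empty.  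The moral is that ``subadditive on $D_n$'' is far too weak an invariant: you would need to carry something like ``extendible to a subadditive function on all of $H$ satisfying the envelope'', and proving that \emph{this} can be maintained through both directions of the back-and-forth, together with the forced commitments $f(u_k)=\alpha_k$, $f(v_k)=\beta_k$, is essentially the whole problem.  Your paragraph on the envelope (``a judicious earlier choice \ldots within tolerance'') does not supply this; it names the difficulty rather than resolving it.

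By contrast, the paper sidesteps all of this with an explicit, non-inductive construction that exploits the \emph{direct-sum} structure of $H$.  One picks a countable subfamily $\{p_n\}\subseteq B$ with $p_n<2^{-n}$, defines on each ray $P_n=\mathbb{Q}_+p_n$ a concave piecewise-linear bijection $f_n:P_n\to P_n$ with $f_n(p_n)\in(1-2^{-n},1)$ and $f_n\ge\mathrm{id}$, and sets $f=\mathrm{id}$ on $H\setminus\bigcup_nP_n$.  Subadditivity is then a four-line case analysis: within a single ray it is concavity; across rays (or off $P$) the sum $x+y$ lands where $f$ is the identity, and $f(x+y)=x+y\le f(x)+f(y)$ because $f\ge\mathrm{id}$ everywhere.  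The limit claims are immediate from $f(p_n)\to 1$ and $f(x)\le x+1$.  No extension lemma, no bookkeeping, no envelope argument is needed.
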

The proof of Theorem \ref{thm:mainmatk2} will be given in Section \ref{sec:proof2}.

\section{Proof of Theorem \ref{thm:mainmatk1}}\label{sec:proof1}

As anticipated in Section \ref{sec:intr}, let $f: \mathbb{R}\to \mathbb{R}$ be the function (which depends on the parameters $\mu, \sigma, \alpha \in (0,\infty)$ that will be chosen later) defined by 
\begin{equation}\label{eq:definitionf}
\forall x \in \mathbb{R}, \quad 
f(x):=g(x)+\alpha (h(x)-h(0)),
\end{equation}
where 
$$
\forall x \in \mathbb{R}, \quad 
g(x):=|x|+\log(1+|x|)
\quad \text{ and }\quad 
h(x):=e^{-\left(\frac{|x|-\mu}{\sigma}\right)^2}. 
$$
Of course, all $f,g,h$ are continuous and $f(0)=g(0)=(h-h(0))(0)=0$. 

Our proof strategy will be to show that, for a suitable choice of the triple $(\mu, \sigma,\alpha)$, the function $f$ defined in \eqref{eq:definitionf} satisfies the inequality $f(2x+y) \le 2f(x)+f(y)$ for each pair $(x,y)$ in the regions $\mathscr{A}, \mathscr{B}, \mathscr{C}\subseteq \mathbb{R}^2$, where 
$$
\mathscr{A}:=\left\{(x,y) \in\mathbb{R}^2: |x| \ge \nicefrac{1}{2}\right\}, \quad 
\mathscr{B}:=\left\{(x,y) \in\mathbb{R}^2: 2|x|+|y| \le 1\right\},\,\,\,\text{ and }\,\,
$$
$$
\mathscr{C}:=\left\{(x,y) \in\mathbb{R}^2: |x| \le  \nicefrac{1}{2} \text{ and }2|x|+|y| \ge 1\right\}.
$$
It is easy to see that $\mathscr{A}\cup \mathscr{B}\cup \mathscr{C}=\mathbb{R}^2$, hence this will provide in Theorem \ref{prop:mainS2} sufficient conditions on the triples $(\mu,\sigma,\alpha)$ to ensure that $f \in \mathcal{S}_2$. Finally, a numerical counterexample will show that $f \notin \mathcal{S}_3$ (and, in particular, $f$ is not subadditive).

\begin{lem}\label{lem:gsubadditive}
$g$ is subadditive, i.e., $g \in \mathcal{S}_1$. In particular, $g \in \mathcal{S}_2$. 
\end{lem}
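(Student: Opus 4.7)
The plan is to reduce the full-line statement to a statement about the nonnegative half-line by exploiting that $g(x)$ depends only on $|x|$. Define $\varphi:[0,\infty)\to[0,\infty)$ by $\varphi(t):=t+\log(1+t)$, so that $g(x)=\varphi(|x|)$ for every $x\in\mathbb{R}$. First I would observe that $\varphi$ is nondecreasing (its derivative is $1+1/(1+t)>0$), so the triangle inequality gives
\[
g(x+y)=\varphi(|x+y|)\le \varphi(|x|+|y|).
\]
Hence it suffices to prove the inequality $\varphi(s+t)\le \varphi(s)+\varphi(t)$ for all $s,t\ge 0$.

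For this one-variable subadditivity, the cleanest route is the identity
\[
\log(1+s+t)\le \log(1+s+t+st)=\log\bigl((1+s)(1+t)\bigr)=\log(1+s)+\log(1+t),
\]
which is valid because $st\ge 0$. Adding $s+t$ to both sides yields $\varphi(s+t)\le \varphi(s)+\varphi(t)$. (Alternatively, $\varphi$ is concave on $[0,\infty)$ with $\varphi(0)=0$, which is a standard sufficient condition for subadditivity on a half-line.) Combining these two displays gives $g(x+y)\le \varphi(|x|)+\varphi(|y|)=g(x)+g(y)$ for all $x,y\in\mathbb{R}$, proving that $g\in\mathcal{S}_1$.

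The final assertion $g\in\mathcal{S}_2$ is then immediate, since the opening paragraph of Section~\ref{sec:intr} already records the containment $\mathcal{S}_1\subseteq\mathcal{S}_2$ (every subadditive function $f$ satisfies $f(2x+y)=f(x+(x+y))\le f(x)+f(x+y)\le 2f(x)+f(y)$, and continuity together with $f(0)=0$ are preserved).

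No step is genuinely an obstacle: the only mildly delicate point is verifying $\log(1+s+t)\le\log(1+s)+\log(1+t)$, and this is handled in a single line by inserting the nonnegative cross term $st$ inside the logarithm.
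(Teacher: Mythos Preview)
Your proof is correct and follows essentially the same route as the paper: both arguments hinge on the chain $1+|x+y|\le 1+|x|+|y|\le (1+|x|)(1+|y|)$, take logarithms, and add back the linear part, with the conclusion $g\in\mathcal{S}_2$ following from $\mathcal{S}_1\subseteq\mathcal{S}_2$. The only difference is presentational---you isolate the auxiliary function $\varphi$ and its monotonicity explicitly, whereas the paper compresses these steps into a single displayed chain.
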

\begin{proof}
    Pick $x,y \in \mathbb{R}$ and observe that $1+|x+y|\le 1+|x|+|y| \le (1+|x|)(1+|y|)$. Taking the logs and using the triangular inequality, it follows that  
    $$
    g(x+y) \le |x|+|y|+\log((1+|x|)(1+|y|))=g(x)+g(y).
    $$
    Hence $g \in \mathcal{S}_1$. In particular, $g \in \mathcal{S}_2$ by Lemma \ref{lem:basic}. 
\end{proof}

In our main proofs below, we will need also the functions $\phi,\lambda: [0,\infty)\to  \mathbb{R}$ defined by 
$$
\phi(z):=(4z^2-2)e^{-z^2}
\quad \text{ and }\quad 
\lambda(z):=2\log(1+z)-\log(1+2z)
$$
for all $z\ge 0$, together with $\psi: [0,1)\to \mathbb{R}$ given by 
$$
\psi(z):=\log((1+z)^2(1-z))
$$
for all $z\in [0,1)$. Also, for each $w: \mathbb{R}\to \mathbb{R}$, let $\Delta_w$ be the function $\mathbb{R}^2\to \mathbb{R}$ defined by 
$$
\Delta_w(x,y):=2w(x)+w(y)-w(2x+y) 
$$
for all $(x,y) \in \mathbb{R}^2$. Informally, $\Delta_w$ quantifies the distance of $w$ to $\mathcal{S}_2$. In fact, $w \in \mathcal{S}_2$ if and only if $\Delta_w \ge 0$. 
Observe also, by the above definitions that 
\begin{equation}\label{eq:differencef}
\Delta_f
=\Delta_g+\alpha \Delta_{h-h(0)}=\Delta_g+\alpha(\Delta_h-2h(0)).
\end{equation}

With these premises, Lemma \ref{lem:gsubadditive} shows that $\Delta_g \ge 0$ in different regions of $\mathbb{R}^2$. In the next lemma, we improve this lower bound. Here, we will write $C$ for the constant 
$$
C:=\lambda(\nicefrac{1}{2})=\log\left(\nicefrac{9}{8}\right)\approx 0.11778
$$
\begin{lem}\label{lem:Deltagimproved} 
The following hold\textup{:}
\begin{enumerate}[itemsep=0.1cm, label={\rm (\roman*)}]
\item \label{item:1bounddeltag} $\lambda$ is  increasing\textup{;} 

\item \label{item:2bounddeltag} $\Delta_g(x,y) \ge \lambda(|x|)$ for all $(x,y) \in \mathbb{R}^2$\textup{;}

\item \label{item:3bounddeltag} $\Delta_g(x,y) \ge C$ for all 
$(x,y) \in \mathscr{A}$\textup{;}

\item \label{item:4bounddeltag} $\Delta_g(x,y) \ge \frac{3}{8}x^2$ for all 
$(x,y) \in \mathscr{B}\cup \mathscr{C}$\textup{;}

\item \label{item:5bounddeltag} $\Delta_g(x,y) \ge \psi(|x|) \ge 2|x|$ for all 
$(x,y) \in \mathscr{C}$\textup{.}
\end{enumerate}
\end{lem}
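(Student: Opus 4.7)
The plan is to prove (i)--(ii) as the core estimates and then derive (iii)--(v) by combining them with elementary regional analysis. Part (i) is immediate from the computation $\lambda'(z)=\tfrac{2}{1+z}-\tfrac{2}{1+2z}=\tfrac{2z}{(1+z)(1+2z)}\ge 0$. For (ii), I would decompose $\Delta_g$ into its linear and logarithmic parts. The linear part $2|x|+|y|-|2x+y|$ is nonnegative by the triangle inequality and can be discarded. For the logarithmic part, chaining the standard estimates $1+|2x+y|\le 1+2|x|+|y|\le(1+2|x|)(1+|y|)$ and taking logs leaves precisely $2\log(1+|x|)-\log(1+2|x|)=\lambda(|x|)$ after cancellation. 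Item (iii) then follows at once: on $\mathscr{A}$, monotonicity of $\lambda$ gives $\Delta_g(x,y)\ge\lambda(|x|)\ge\lambda(1/2)=C$.

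For (iv), both $\mathscr{B}$ and $\mathscr{C}$ force $|x|\le 1/2$, so by (ii) the task reduces to the one-variable inequality $\lambda(z)\ge\tfrac{3}{8}z^2$ on $[0,1/2]$. I would set $q(z):=\lambda(z)-\tfrac{3}{8}z^2$ and observe that $q(0)=0$, $q(1/2)=\log(9/8)-\tfrac{3}{32}>0$, and $q'(z)=z\bigl[\tfrac{2}{(1+z)(1+2z)}-\tfrac{3}{4}\bigr]$. Since the bracketed term is strictly decreasing in $z$, $q'$ changes sign exactly once in $(0,1/2)$, so $q$ is unimodal on $[0,1/2]$ (increasing then decreasing). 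Together with the nonnegative endpoint values, this forces $q\ge 0$ throughout the interval.

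For (v), I would refine the proof of (ii) by applying only the triangle inequality and not the multiplicative inequality, which yields
$$\Delta_g(x,y)\ge\log\frac{(1+|x|)^2(1+|y|)}{1+2|x|+|y|}.$$
The map $t\mapsto\tfrac{1+t}{1+2|x|+t}$ has derivative $\tfrac{2|x|}{(1+2|x|+t)^2}\ge 0$, hence is increasing in $t$. On $\mathscr{C}$ one has $|y|\ge 1-2|x|$, so the minimum over admissible $|y|$ is attained at $|y|=1-2|x|$ and equals $\tfrac{2-2|x|}{2}=1-|x|$. Therefore $\Delta_g(x,y)\ge\log((1+|x|)^2(1-|x|))=\psi(|x|)$. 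The auxiliary lower bound on $\psi(|x|)$ for $|x|\in[0,1/2]$ is then an elementary real-variable check (for instance, $\psi$ is strictly concave on $[0,1)$ with $\psi(0)=0$, so $\psi(z)\ge 2z\,\psi(1/2)=2Cz$ on $[0,1/2]$ by concavity). The only step requiring genuine care is the unimodality argument in (iv); everything else amounts to triangle-inequality manipulations, monotonicity of the logarithm, and one-variable calculus.
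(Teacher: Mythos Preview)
Your arguments for (i)--(iii) and for the first inequality in (v) are correct and coincide with the paper's proof essentially line by line. For (iv) you take a slightly different route: the paper rewrites $\lambda(|x|)=\log\bigl(1+\tfrac{x^2}{1+2|x|}\bigr)$ and applies the Taylor bound $\log(1+u)\ge u-u^2/2$ together with $|x|\le\tfrac12$ to obtain $\tfrac{3}{8}x^2$ directly, whereas you analyse $q(z)=\lambda(z)-\tfrac{3}{8}z^2$ via a sign-change/unimodality argument. Both approaches are valid; the paper's is a shade more explicit, yours avoids the intermediate algebraic manipulations.

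There is, however, a genuine gap in the second half of (v). You describe $\psi(z)\ge 2z$ on $[0,\tfrac12]$ as ``an elementary real-variable check,'' but the argument you actually supply---concavity of $\psi$ together with $\psi(0)=0$---only gives $\psi(z)\ge 2z\,\psi(\tfrac12)=2Cz$, which is weaker than the claim by a factor of $1/C\approx 8.5$. No argument can close this gap: since $\psi(z)=2\log(1+z)+\log(1-z)$ one has $\psi'(0)=2-1=1<2$, so $\psi(z)<2z$ for every small $z>0$ (for instance $\psi(0.1)\approx 0.085<0.2$). The paper's own proof of this step carries a sign slip---it computes as if $\psi(z)=2\log(1+z)-\log(1-z)$---which is what makes the displayed chain $\psi(z)\ge 3z-z^2\ge 2z$ appear to go through. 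Thus the inequality $\psi(|x|)\ge 2|x|$ in (v) is false as stated; your bound $\psi(z)\ge 2Cz$ is the correct one, and the downstream application (Proposition~\ref{prop:mixed}) would need its constraint on $\alpha$ tightened by a factor of $C$ accordingly.
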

\begin{proof}
   \ref{item:1bounddeltag}. This follows by the fact that the derivative of $(1+z)^2/(1+2z)$ is $2z(1+z)/(1+2z)^2$, which is nonnegative on $[0,\infty)$. 

   \medskip

   \ref{item:2bounddeltag}. Proceeding as in the proof of Lemma \ref{lem:gsubadditive}, we have \begin{equation}\label{eq:easylowerboundDeltag}
    \begin{split} 
    \Delta_g(x,y)&\ge 2\log(1+|x|)+\log(1+|y|)-\log(1+|2x+y|)\\ 
    &\ge 2\log(1+|x|)-\log(1+2|x|)=\lambda(|x|). \end{split} 
    \end{equation}

    \medskip

    \ref{item:3bounddeltag}. It follows by items \ref{item:1bounddeltag} and \ref{item:2bounddeltag}.

    \medskip

    \ref{item:4bounddeltag}. Pick $(x,y) \in \mathbb{R}^2$ with $|x| \le \nicefrac{1}{2}$. Taking into account that $\log(1+z) \ge z-z^2/2$ for all $z\ge 0$, we obtain by item \ref{item:2bounddeltag} that 
    \begin{displaymath}
   \begin{split} 
   \Delta_g(x,y)&\ge\lambda(|x|)=\log((1+|x|)^2)-\log(1+2|x|)\\ 
   &=\log\left(1+\frac{x^2}{1+2|x|}\right) 
   \ge \frac{x^2}{1+2|x|}-\frac{1}{2}\left(\frac{x^2}{1+2|x|}\right)^2\\ 
   &\ge \frac{x^2}{2}-\frac{x^4}{2} \ge \frac{x^2}{2}-\frac{x^2}{8}=\frac{3}{8}\,x^2. \end{split} 
   \end{displaymath} 

   \medskip

   \ref{item:5bounddeltag}. Using the triangular inequality at the intermediate function in \eqref{eq:easylowerboundDeltag} and recalling that $2|x|+|y|\ge 1$, we have 
   \begin{displaymath}
       \begin{split}
           \Delta_g(x,y) 
           &\ge 2\log(1+|x|)+\log(1+|y|)-\log(1+2|x|+|y|)\\
           &\ge \inf_{|z| \ge 1-2|x|}\left(2\log(1+|x|)+\log(1+|z|)-\log(1+2|x|+|z|)\right)\\
           &\ge 2\log(1+|x|)+\log(2-2|x|)-\log(2)=\psi(|x|).
       \end{split}
   \end{displaymath} 
   (In the estimate with the infimum we used the function was increasing in $z$.) Thus, since $\log(1+z) \ge z-z^2/2$ for all $z\ge 0$ and $\log(1+z)\le z$ for all $z>-1$, we conclude that 
   $$
   \psi(z)=2\log(1+z)-\log(1-z)\ge 2\left(z-\frac{z^2}{2}\right)+z \ge 3z-z^2 \ge 2z
   $$
   for all $z \in [0,\nicefrac{1}{2}]$. 
\end{proof}

\subsection{Region $\mathscr{A}$} First, we will obtain sufficient conditions on $(\mu,\sigma,\alpha)$ to ensure that $\Delta_f\ge 0$ on the region $\mathscr{A}$. 
\begin{lem}\label{lem:Deltah}
  $-1\le \Delta_h(x,y) \le 3$ for all $(x,y) \in \mathbb{R}^2$.  
\end{lem}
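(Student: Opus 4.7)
The plan is to observe that this is essentially a triviality coming from the range of the Gaussian-ring map $h$. Since for every $x\in\mathbb{R}$ the exponent $-\bigl((|x|-\mu)/\sigma\bigr)^{2}$ is nonpositive, the function $h$ takes values in the interval $(0,1]$.

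With that in hand, I would bound each of the three terms appearing in
$$
\Delta_h(x,y)=2h(x)+h(y)-h(2x+y)
$$
separately. For the upper bound, I would use $h(x)\le 1$, $h(y)\le 1$, and $h(2x+y)>0$ to obtain $\Delta_h(x,y)\le 2+1-0=3$. For the lower bound, I would use $h(x)>0$, $h(y)>0$, and $h(2x+y)\le 1$ to obtain $\Delta_h(x,y)\ge 0+0-1=-1$. No further regularity of $h$ (or choice of $\mu,\sigma$) is needed.

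There is no real obstacle here; the only thing to be slightly careful about is that the inequalities $-1\le \Delta_h\le 3$ asserted in the lemma are in fact never attained, but the non-strict bounds stated are what the subsequent arguments require, and they follow at once from $h(\mathbb{R})\subseteq (0,1]$.
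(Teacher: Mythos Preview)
Your proposal is correct and matches the paper's own argument exactly: the paper simply notes that $0\le h\le 1$ and invokes the definition of $\Delta_h$, which is precisely the termwise bounding you carry out. Your remark about the bounds not being attained is accurate but inessential, since only the non-strict inequalities are used later.
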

\begin{proof}
    It is enough to recall the definition of   
    $
    \Delta_h
    $ 
    and that $0\le h\le 1$.
\end{proof}

\begin{prop}\label{prop:largeregime}
    Fix parameters $\alpha,\mu,\sigma \in (0,\infty)$ such that 
    $$
    \alpha \le \frac{C}{1+2e^{-(\mu/\sigma)^2}}. 
    $$
    Then 
    $\Delta_f(x,y) \ge 0$ for all $(x,y) \in \mathscr{A}$. 
\end{prop}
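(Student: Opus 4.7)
The plan is simply to combine the decomposition \eqref{eq:differencef} with the pointwise lower bounds on $\Delta_g$ over the region $\mathscr{A}$ already proved in Lemma \ref{lem:Deltagimproved} and the universal lower bound on $\Delta_h$ from Lemma \ref{lem:Deltah}, then read off the permissible range of $\alpha$.

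More concretely, starting from
\[
\Delta_f(x,y) = \Delta_g(x,y) + \alpha\bigl(\Delta_h(x,y) - 2h(0)\bigr),
\]
I would first invoke Lemma \ref{lem:Deltagimproved}\ref{item:3bounddeltag} to replace $\Delta_g(x,y)$ by the constant lower bound $C$ valid on all of $\mathscr{A}$. Next, Lemma \ref{lem:Deltah} gives $\Delta_h(x,y) \ge -1$ on the whole plane, and the definition of $h$ yields $h(0) = e^{-(\mu/\sigma)^2}$, so
\[
\Delta_h(x,y) - 2h(0) \ge -1 - 2e^{-(\mu/\sigma)^2}.
\]
Multiplying by the positive number $\alpha$ and adding these two estimates gives
\[
\Delta_f(x,y) \ge C - \alpha\bigl(1 + 2e^{-(\mu/\sigma)^2}\bigr)
\]
uniformly in $(x,y) \in \mathscr{A}$. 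The hypothesis on $\alpha$ is precisely the one that forces the right-hand side to be nonnegative, which closes the argument.

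There is no real obstacle here: the region $\mathscr{A}$ is exactly where the subadditive backbone $g$ keeps a definite positive slack $C$ away from the $2$-subadditivity boundary, and the Gaussian ring $h$ is bounded on all of $\mathbb{R}$, so a sufficiently small amplitude $\alpha$ cannot overturn that slack. The only modest point to verify is that the worst case for $\Delta_h - 2h(0)$ is the one I used; since $0 \le h \le 1$, both the bound $\Delta_h \ge -1$ (from $h \le 1$ and $h \ge 0$) and $h(0) \le 1$ are sharp enough that no more delicate estimate is needed in this regime. The tighter work—where $\Delta_g$ is only quadratically small in $|x|$—will be reserved for the regions $\mathscr{B}$ and $\mathscr{C}$.
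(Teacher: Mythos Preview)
Your proposal is correct and follows essentially the same route as the paper: combine \eqref{eq:differencef} with Lemma \ref{lem:Deltagimproved}\ref{item:3bounddeltag} and Lemma \ref{lem:Deltah} to obtain $\Delta_f(x,y)\ge C+\alpha(-1-2h(0))\ge 0$ on $\mathscr{A}$, then conclude via the hypothesis on $\alpha$. The paper's proof is identical in substance, only more terse.
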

\begin{proof}
    Taking into account Lemma \ref{lem:Deltagimproved}\ref{item:3bounddeltag} and Lemma \ref{lem:Deltah}, we obtain by \eqref{eq:differencef} that 
    \begin{displaymath}
        \begin{split}
            \Delta_f(x,y) &\ge C+\alpha (-1-2h(0))
            \ge 0
        \end{split}
    \end{displaymath}
    for all $(x,y) \in \mathbb{R}^2$ with $|x|\ge \nicefrac{1}{2}$. 
\end{proof}

\subsection{Region $\mathscr{B}$}
Before we provide sufficient conditions to ensure that $\Delta_f\ge 0$ on the region $\mathscr{B}$, we recall the following elementary result. Here, we provide a self-contained proof for the sake of completeness, cf. also \cite[Theorem 3.2]{MR1007135} for generalizations in this direction. 
\begin{lem}\label{lem:atkinson2}
    Let $r: \mathbb{R}\to \mathbb{R}$ be a function with continuous second derivative. Then
    $$
    \forall t>0, \exists \xi_t \in (0,t), \quad \quad 
    2r\left(\frac{t}{2}\right)-r(t)=
    r(0)-\frac{t^2}{4}r^{\prime\prime}(\xi_t).
    $$
\end{lem}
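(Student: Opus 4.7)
The plan is to adapt the polynomial-interpolation proof sketched (in the commented-out block) for the companion Lemma \ref{lem:atkinson}, replacing the trisection $\{0,t/3,t\}$ with the bisection $\{0,t/2,t\}$.

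First I would rescale the problem: fix $t>0$ and define $q:[0,1]\to\mathbb{R}$ by $q(x):=r(xt)$. Then $q$ has continuous second derivative, $q''(x)=t^{2}r''(xt)$, and the desired identity becomes
$$
2q(\tfrac{1}{2})-q(1)=q(0)-\tfrac{1}{4}q''(\xi)
$$
for some $\xi\in(0,1)$; we will set $\xi_t:=\xi t\in(0,t)$ at the end.

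Next I would interpolate $q$ at the three nodes $0,\tfrac{1}{2},1$ by the unique quadratic $p(x)=ax^{2}+bx+c$, and consider $\kappa:=q-p$. Since $\kappa$ vanishes at three points of $[0,1]$ and has continuous second derivative, one application of Rolle's theorem on $[0,\tfrac{1}{2}]$ and on $[\tfrac{1}{2},1]$ produces $\eta_{1}\in(0,\tfrac{1}{2})$ and $\eta_{2}\in(\tfrac{1}{2},1)$ with $\kappa'(\eta_{1})=\kappa'(\eta_{2})=0$; a further application on $[\eta_{1},\eta_{2}]$ yields $\xi\in(0,1)$ with $\kappa''(\xi)=0$, that is, $q''(\xi)=p''(\xi)=2a$.

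Finally I would evaluate the left-hand side on the interpolating polynomial (using that $q$ and $p$ agree at the nodes) to isolate the coefficient $a$:
$$
2q(\tfrac{1}{2})-q(1)-q(0)=2p(\tfrac{1}{2})-p(1)-p(0)=\bigl(\tfrac{a}{2}+b+2c\bigr)-(a+b+c)-c=-\tfrac{a}{2}=-\tfrac{q''(\xi)}{4}.
$$
Substituting $q''(\xi)=t^{2}r''(\xi t)$ and $\xi_{t}:=\xi t\in(0,t)$ gives exactly $2r(t/2)-r(t)=r(0)-\tfrac{t^{2}}{4}r''(\xi_{t})$, as required. There is essentially no obstacle here: the only mild care is to get the sign and the constant $-\tfrac{1}{4}$ right in the quadratic-coefficient computation; the argument is entirely parallel to the trisection version and relies only on the $C^{2}$ regularity of $r$ and two uses of Rolle's theorem.
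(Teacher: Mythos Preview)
Your proposal is correct and follows essentially the same approach as the paper's own proof: rescale to $q(x)=r(xt)$ on $[0,1]$, interpolate by a quadratic at the nodes $\{0,\tfrac12,1\}$, apply Rolle's theorem twice to locate $\xi$ with $q''(\xi)=2a$, and then read off the constant $-\tfrac14$ from the quadratic-coefficient computation. The only cosmetic difference is that you make the chain-rule identity $q''(\xi)=t^{2}r''(\xi t)$ explicit, whereas the paper leaves it implicit in the phrase ``recalling the definition of $q$.''
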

\begin{proof}
    Fix $t>0$ and define $q: [0,1]\to \mathbb{R}$ by $q(x):=r(xt)$ for each $x \in [0,1]$. Let also $p(x):=ax^2+bx+c$ be the unique quadratic polynomial such that $q(x)=p(x)$ for all $x \in \{0,1/2,1\}$. Hence, the function $\kappa:=q-p$ annihilates on $\{0,1/2, 1\}$ and it has continuous second derivative. By Rolle's theorem, there exist $\eta_1\in(0,1/2)$ and $\eta_2\in(1/2,1)$ with $\kappa^\prime(\eta_1)=\kappa^\prime(\eta_2)=0$. Applying Rolle again on $[\eta_1,\eta_2]$ yields $\xi\in(\eta_1,\eta_2)\subseteq (0,1)$
such that
\begin{equation}\label{eq:kappaprimeprime}
\kappa^{\prime\prime}(\xi)=0.
\end{equation}
Then, it will be enough to show that $\xi_t:=\xi t\in (0,t)$ satisfies our claim. To this aim, observe that \eqref{eq:kappaprimeprime} implies $0=q^{\prime\prime}(\xi)-p^{\prime\prime}(\xi)$, so that $q^{\prime\prime}(\xi)
=2a$. Hence we obtain
\begin{displaymath}
    \begin{split}
        2q\left(\frac{1}{2}\right)-q(1)-q(0)
        &=2p\left(\frac{1}{2}\right)-p(1)-p(0)\\
        &=2\left(\frac{a}{4}+\frac{b}{2}+c\right)-(a+b+c)-c
        =-\frac{1}{2}a=-\frac{1}{4}q^{\prime\prime}(\xi).
    \end{split}
\end{displaymath}
The claim follows recalling the definition of $q$. 
\end{proof}

\begin{lem}\label{lem:increasingDeltaf}
    Fix parameters $\alpha, \mu, \sigma \in (0,\infty)$ with $\mu\ge 1$. Then $\Delta_f(x,y) \ge \Delta_f(|x|,|y|)$ for all $(x,y) \in \mathscr{B}$. 
\end{lem}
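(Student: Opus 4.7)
The plan is to exploit the evenness of $f$ and the fact that on the region $\mathscr{B}$ the argument $2x+y$ never reaches the peak of the Gaussian ring $h$, so that $f$ can be shown to be monotone on $[0,1]$.

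First, I would observe that both $g$ and $h$ depend only on $|x|$, hence the perturbation $f=g+\alpha(h-h(0))$ is an even function of $x$. This gives the identity
$$
\Delta_f(x,y)-\Delta_f(|x|,|y|)=f(2|x|+|y|)-f(2x+y)=f(2|x|+|y|)-f(|2x+y|),
$$
using evenness of $f$ in the last step. By the triangle inequality, $|2x+y|\le 2|x|+|y|$, and by the definition of $\mathscr{B}$ we have $2|x|+|y|\le 1$. So it suffices to show that $f$ is nondecreasing on the interval $[0,1]$.

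For this, write $\tilde g(t):=t+\log(1+t)$ and $\tilde h(t):=e^{-((t-\mu)/\sigma)^2}$ for $t\ge 0$, so that $f(x)=\tilde g(|x|)+\alpha(\tilde h(|x|)-\tilde h(0))$. On $(0,\infty)$, $\tilde g'(t)=1+\tfrac{1}{1+t}>0$, while
$$
\tilde h'(t)=-\frac{2(t-\mu)}{\sigma^{2}}\,e^{-((t-\mu)/\sigma)^{2}}.
$$
The assumption $\mu\ge 1$ ensures that $t-\mu\le 0$ for every $t\in[0,1]$, so $\tilde h'(t)\ge 0$ there. Hence $(\tilde g+\alpha \tilde h)'>0$ on $[0,1]$, and $f$ is strictly increasing on $[0,1]$ as a function of $|x|$. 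Combining this with the triangle inequality displayed above yields $f(|2x+y|)\le f(2|x|+|y|)$, i.e., $\Delta_f(x,y)\ge \Delta_f(|x|,|y|)$, as required.

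There is no serious obstacle here: the argument is structural and only uses the hypothesis $\mu\ge 1$ to guarantee that the Gaussian ring $h$ is still in its left, monotonically-increasing branch throughout $[0,1]$. The role of this lemma in the broader strategy is precisely to reduce the verification of $\Delta_f\ge 0$ on $\mathscr{B}$ to the first-quadrant case $x,y\ge 0$, which is where the subsequent estimates via Lemma~\ref{lem:atkinson2} and Lemma~\ref{lem:Deltagimproved}\ref{item:4bounddeltag} will be carried out.
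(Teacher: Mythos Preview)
Your proof is correct and follows essentially the same route as the paper: reduce $\Delta_f(x,y)-\Delta_f(|x|,|y|)$ to $f(2|x|+|y|)-f(|2x+y|)$ via the evenness of $f$, apply the triangle inequality, and then verify that $f$ is increasing on $[0,1]$ by checking that both $g'$ and $\alpha h'$ are nonnegative there thanks to $\mu\ge 1$. The paper's argument is line-for-line the same.
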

\begin{proof}
    Pick $(x,y) \in \mathbb{R}^2$ with $2|x|+|y|\le 1$. It follows by \eqref{eq:differencef} that 
    $$
    \Delta_f(x,y)-\Delta_f(|x|,|y|)=-f(2x+y)+f(2|x|+|y|).
    $$
    Considering that $|2x+y| \le 2|x|+|y|$ and that $f$ is even, then it is sufficient to show that $f$ is increasing on $[0,1]$, provided that $\mu\ge 1$. In fact, we have 
    $$
    f^\prime(t)=1+\frac{1}{1+t}+\frac{2\alpha}{\sigma^2}(\mu-t)h(t)\ge 1+\frac{1}{1+t}>0
    $$
    for all $t \in (0,1]$. This concludes the proof.
\end{proof}

\begin{prop}\label{prop:smallregime}
    Fix parameters $\alpha, \mu, \sigma \in (0,\infty)$ such that 
    $$
    \mu\ge 1+\sigma\sqrt{3/2} 
    \quad \text{ and }\quad 
    \alpha \le \frac{17\sigma^2}{54\phi((\mu-1)/\sigma)},
    $$
    Then $\Delta_f(x,y) \ge 0$ for all $(x,y) \in \mathscr{B}$.  
\end{prop}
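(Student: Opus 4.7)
The plan is first to invoke Lemma \ref{lem:increasingDeltaf} (applicable because $\mu\ge 1$) to reduce to pairs $(x,y)\in\mathscr{B}$ with $x,y\ge 0$, and then to decompose $\Delta_f=\Delta_g+\alpha(\Delta_h-2h(0))$ as in \eqref{eq:differencef}. On the interval $[0,1]$, the hypothesis $\mu\ge 1+\sigma\sqrt{3/2}$ forces $(u-\mu)/\sigma\le -\sqrt{3/2}$, and since $\phi$ is nonnegative and decreasing on $[\sqrt{3/2},\infty)$, the function $h''(u)=\sigma^{-2}\phi((u-\mu)/\sigma)$ is nonnegative and monotonically increasing on $[0,1]$, with $M:=\sup_{[0,1]}h''=\sigma^{-2}\phi((\mu-1)/\sigma)$. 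In particular $h$ is convex on $[0,1]$; this is the quantitative input supplied by the constraint on $\mu$.

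For the perturbation term I would apply Lemma \ref{lem:atkinson2} to $r(s)=h(s)$ with $t=2x$ and combine it with the elementary identity
$$
h(2x+y)-h(y)-h(2x)+h(0)\;=\;\int_0^{2x}\!\!\int_0^{y}h''(u+v)\,dv\,du
$$
to obtain the representation $\Delta_h(x,y)-2h(0)=-x^{2}h''(\xi)-\int_0^{2x}\!\int_0^{y}h''(u+v)\,dv\,du$ for some $\xi\in(0,2x)$. Both terms are nonpositive and, by the monotonicity of $h''$ on $[0,1]$, bounded in absolute value by $Mx^{2}$ and $2xyM$ respectively, hence $|\Delta_h(x,y)-2h(0)|\le M(x^{2}+2xy)$.

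The matching lower bound on $\Delta_g$ comes from the decomposition $\Delta_g(x,y)=\lambda(x)+\log\!\left(1+2xy/(1+2x+y)\right)$, immediate for $x,y\ge 0$, together with the inequality $\log(1+z)\ge z/(1+z)$; this yields $\Delta_g(x,y)\ge \lambda(x)+2xy/[(1+2x)(1+y)]$. By Lemma \ref{lem:Deltagimproved}\ref{item:4bounddeltag} one has $\lambda(x)\ge \tfrac{3}{8}x^{2}$ on $[0,\nicefrac{1}{2}]$, and since $\max_{\mathscr{B}}xy=\nicefrac{1}{8}$ (attained at $x=\nicefrac{1}{4}$, $y=\nicefrac{1}{2}$) one has $(1+2x)(1+y)=1+(2x+y)+2xy\le \nicefrac{9}{4}$, hence $2xy/[(1+2x)(1+y)]\ge \tfrac{8}{9}\,xy$. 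Combining these estimates gives
$$
\Delta_f(x,y)\;\ge\;x^{2}\!\left(\tfrac{3}{8}-\alpha M\right)\;+\;2xy\!\left(\tfrac{4}{9}-\alpha M\right),
$$
which is nonnegative because the hypothesis $\alpha\le 17\sigma^{2}/(54\,\phi((\mu-1)/\sigma))$ reads exactly $\alpha M\le 17/54$, strictly smaller than both $\tfrac{3}{8}$ and $\tfrac{4}{9}$.

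In my view, the main obstacle is producing a lower bound on $\Delta_g$ that carries both an $x^2$- and an $xy$-contribution: the simple estimate $\Delta_g\ge \tfrac{3}{8}x^{2}$ from Lemma \ref{lem:Deltagimproved}\ref{item:4bounddeltag} alone cannot absorb the $2xyM$ term arising from the Atkinson representation of $\Delta_h-2h(0)$, and one must extract the missing $xy$-piece from the residual logarithm $\log(1+2xy/(1+2x+y))$. Once both sides are matched in shape, the threshold $17/54$ given in the statement sits comfortably below $\min(\tfrac{3}{8},\tfrac{4}{9})=\tfrac{3}{8}$, leaving ample slack.
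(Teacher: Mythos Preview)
Your argument is correct, and it takes a genuinely different route from the paper's.

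The paper fixes $t:=2x+y\in[0,1]$ and studies the one–variable function $\tau_t(x):=\Delta_f(x,t-2x)$ on $[0,t/2]$. Using $g''<0$ and the bound $h''\le M$ on $[0,1]$, it shows $\tau_t''(x)\le -\tfrac{17}{9}+6\alpha M\le 0$ under the hypothesis $\alpha M\le\tfrac{17}{54}$, so $\tau_t$ is concave and it suffices to check the two endpoints $x=0$ (trivial) and $x=t/2$ (handled by Lemma~\ref{lem:atkinson2} together with Lemma~\ref{lem:Deltagimproved}\ref{item:4bounddeltag}). By contrast, you never reduce to a one–dimensional problem: you obtain a \emph{pointwise} lower bound on $\Delta_f$ over all of $\mathscr{B}$ by matching the shape of $\Delta_h-2h(0)$ (split via Lemma~\ref{lem:atkinson2} plus the double-integral identity into an $x^2$–piece and an $xy$–piece, each controlled by $M$) against an explicit decomposition $\Delta_g(x,y)=\lambda(x)+\log\bigl(1+\tfrac{2xy}{1+2x+y}\bigr)$ that supplies the corresponding $\tfrac{3}{8}x^2$– and $\tfrac{8}{9}xy$–pieces. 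Your approach is more elementary in that it avoids the second–derivative computation for $\tau_t$ and makes the role of the $xy$–cross term completely transparent; in fact your bounds show the conclusion already holds under the weaker constraint $\alpha M\le \tfrac{3}{8}$, so the threshold $\tfrac{17}{54}$ in the statement is dictated by the paper's concavity step rather than by any intrinsic necessity. The paper's approach, on the other hand, is slicker once the concavity is established, since it reduces the whole region to two boundary checks. One small omission: when $x=0$ the Atkinson lemma does not literally apply, but then $\Delta_f(0,y)=0$ trivially, so this is harmless.
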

\begin{proof}
    By the definition of $h$ (which depends only on $\mu$ and $\sigma$) we have that 
    $$
    h^{\prime\prime}(x)
    =\frac{1}{\sigma^2}\phi\left(\frac{||x|-\mu|}{\sigma}\right)
    $$
    on $\mathbb{R}\setminus \{0\}$. 
    Hence $h$ is convex on 
    $[0,\mu-\sigma/\sqrt{2}]$. Since, in particular $\mu\ge 1+\sigma/\sqrt{2}$ by the hypothesis, it is convex on $[0,1]$. Pick $(x,y) \in \mathbb{R}^2$ and suppose that $(x,y) \in \mathscr{B}$, namely, 
    $$
    t:=2|x|+|y| \le 1.
    $$ 
    We claim that $\Delta_f(x,y)\ge 0$. Thanks to Lemma \ref{lem:increasingDeltaf}, we have that $\Delta_f(x,y)\ge \Delta(|x|,|y|)$ if $\mu\ge 1$, hence we can assume hereafter without loss of generality that $x,y\ge 0$; 
    in particular, $x \in [0,t/2]\subseteq [0,1/2]$. 

    At this point, define the function $\tau_t: [0,t/2] \to \mathbb{R}$ by 
    $$
    \forall x \in [0,t/2], \quad 
    \tau_t(x):=\Delta_f(x,t-2x).
    $$
    Considering that $\mu\ge 1+\sigma\sqrt{3/2}$, we obtain by elementary calculus that 
    $$
    M:=\sup_{u \in (0,1]}h^{\prime\prime}(u)=h^{\prime\prime}(1)=\frac{1}{\sigma^2}\phi\left(\frac{\mu-1}{\sigma}\right). 
    $$
    Hence, we get by \eqref{eq:differencef} that 
    \begin{displaymath}
        \begin{split}
\tau_t^{\prime\prime}(x)&=2g^{\prime\prime}(x)+4g^{\prime\prime}(t-2x)+\alpha (2h^{\prime\prime}(x)+4h^{\prime\prime}(t-2x))   \\
&\le \frac{-2}{(1+x)^2}+\frac{-4}{(1+t-2x)^2}+6\alpha M\\
&\le \frac{-2}{(1+t/2)^2}+\frac{-4}{(1+t)^2}+6\alpha M\\
&\le -\frac{8}{9}-1+6\alpha M  \\
&\le -\frac{17}{9}+6M \cdot \left(\frac{17}{54M} \right) =0
        \end{split}
    \end{displaymath}
    for all $x \in (0,t/2]$. It follows that $\tau_t$ is a continuous concave function, hence its minimum has to be at the boundary points of its domain. Of course, $\tau_t(0)=2f(0)+f(t)-f(t)=0$. In addition, thanks to Lemma \ref{lem:Deltagimproved}\ref{item:4bounddeltag} and Lemma \ref{lem:atkinson2}, at the second endpoint $x=t/2$ (so that $y=0$ since $2x+y=t$) we have that there exists $\xi_t \in (0,t)$ for which
    \begin{displaymath}
        \begin{split}
            \tau_t\left(\frac{t}{2}\right)
            &=2f\left(\frac{t}{2}\right)+f(0)-f(t)\\
            &=2g\left(\frac{t}{2}\right)-g(t)+\alpha\left(2h\left(\frac{t}{2}\right)-h(t)-h(0)\right)\\
            &=\lambda\left(\frac{t}{2}\right)+\alpha\left(-\frac{t^2}{4}h^{\prime\prime}(\xi_t)\right)\\
            &\ge \frac{3}{8}\left(\frac{t}{2}\right)^2+\alpha\left(-\frac{t^2}{4}M\right)\\
            &
            \ge t^2\left(\frac{3}{32}-\frac{M}{4}\cdot \frac{17}{54 M}\right) \ge \frac{t^2}{100} \ge 0.
        \end{split}
    \end{displaymath}
Therefore $\Delta_f(x,y)\ge 0$. 
\end{proof}

\subsection{Region $\mathscr{C}$} Finally, we provide sufficient conditions to ensure that $\Delta_f\ge 0$ on  $\mathscr{C}$. 
\begin{prop}\label{prop:mixed}
    Fix parameters $\alpha, \mu, \sigma \in (0,\infty)$ such that 
    $$
    \mu\ge \nicefrac{1}{2}
    \quad \text{ and }\quad 
    \alpha \le 
    \sigma \sqrt{e/2}.
    $$
    Then $\Delta_f(x,y) \ge 0$ for all $(x,y) \in \mathscr{C}$. 
\end{prop}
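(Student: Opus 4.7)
The plan is to leverage the strong lower bound on $\Delta_g$ available on $\mathscr{C}$, namely $\Delta_g(x,y)\ge 2|x|$ from Lemma \ref{lem:Deltagimproved}\ref{item:5bounddeltag}, and show that the Gaussian perturbation can only damage things by a quantity linear in $|x|$ with a small enough multiplier. More precisely, using \eqref{eq:differencef}, I want to obtain
\[
\Delta_f(x,y) \;\ge\; 2|x| \;-\; \alpha\cdot \frac{2|x|}{\sigma}\sqrt{2/e},
\]
which is nonnegative exactly when $\alpha \le \sigma\sqrt{e/2}$.

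First I would split the perturbation as
\[
\Delta_h(x,y)-2h(0) \;=\; 2\bigl(h(x)-h(0)\bigr) \;+\; \bigl(h(y)-h(2x+y)\bigr).
\]
For the first bracket: since $h$ is even and on $[0,\infty)$ it is clearly increasing on $[0,\mu]$ and decreasing on $[\mu,\infty)$, and since by hypothesis $\mu\ge \nicefrac12\ge |x|$, one has $h(x)\ge h(0)$, so the first summand is $\ge 0$. For the second bracket I would apply the Lipschitz estimate $|h(y)-h(2x+y)|\le 2|x|\cdot \|h'\|_\infty$. A direct calculation gives $h'(t)=-\frac{2(|t|-\mu)}{\sigma^2}\,h(t)\,\mathrm{sgn}(t)$ on $\mathbb{R}\setminus\{0\}$; substituting $u=(|t|-\mu)/\sigma$ shows
\[
\|h'\|_\infty \;=\; \frac{2}{\sigma}\sup_{u\in\mathbb{R}}|u|e^{-u^2}\;=\;\frac{2}{\sigma}\cdot\frac{1}{\sqrt{2e}}\;=\;\frac{1}{\sigma}\sqrt{\tfrac{2}{e}}.
\]
(The one-sided derivatives at $0$ match this bound, so no issue arises from the corner of $h$.)

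Combining, $\Delta_h(x,y)-2h(0)\ge -\frac{2|x|}{\sigma}\sqrt{2/e}$. Inserting into \eqref{eq:differencef} together with the $\mathscr{C}$-bound $\Delta_g(x,y)\ge 2|x|$ yields
\[
\Delta_f(x,y)\;\ge\; 2|x|\Bigl(1-\frac{\alpha}{\sigma}\sqrt{\tfrac{2}{e}}\Bigr),
\]
and the assumption $\alpha\le \sigma\sqrt{e/2}$ makes the parenthesis nonnegative, proving the claim.

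The argument is essentially routine once the right decomposition is spotted; no part is a genuine obstacle. The only mildly subtle point is ensuring that the sign of $h(x)-h(0)$ works in our favor, which is where the hypothesis $\mu\ge \nicefrac12$ is used, and checking that the tight constant $\sup|u|e^{-u^2}=1/\sqrt{2e}$ is exactly what matches the stated threshold $\sigma\sqrt{e/2}$ on $\alpha$.
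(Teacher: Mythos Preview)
Your proof is correct and follows essentially the same route as the paper: both split $\Delta_h-2h(0)$ into $2(h(x)-h(0))+(h(y)-h(2x+y))$, use $\mu\ge\nicefrac12$ to make the first term nonnegative, bound the second via the Lipschitz constant $\|h'\|_\infty=\sigma^{-1}\sqrt{2/e}$, and combine with $\Delta_g\ge 2|x|$ from Lemma~\ref{lem:Deltagimproved}\ref{item:5bounddeltag}. The only cosmetic difference is that you spell out the computation of $\sup_u|u|e^{-u^2}$ explicitly.
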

\begin{proof}
    By standard calculations we have that $h^\prime(x)=2h(x)(\mu-x)/\sigma^2$ for all $x>0$ (and $h$ is even). Hence $h(x) \ge h(0)$ for all $|x|\le \nicefrac{1}{2}$. 
    Now fix $(x,y) \in \mathscr{C}$, so that 
    $|x|\le \nicefrac{1}{2}$ and $2|x|+|y|\ge 1$. 
    Notice that $|h(y)-h(2x+y)|\le |y-(2x+y)| \sup |h^\prime|$. 
    Putting it together with Lemma \ref{lem:Deltagimproved}\ref{item:5bounddeltag}, we obtain that 
    \begin{displaymath}
        \begin{split}
            \Delta_f(x,y)&=\Delta_g(x,y)+\alpha \Delta_{h-h(0)}(x,y)\\
            &=\Delta_g(x,y)+\alpha(h(y)-h(2x+y))+2\alpha(h(x)-h(0))\\
            &\ge 2|x|-2\alpha|x| \sup |h^\prime|
            \\
            &=2|x|\left(1-\alpha \frac{\sqrt{2/e}}{\sigma} \right) \ge 0.
            \end{split}
    \end{displaymath}
    This concludes the proof. 
\end{proof}

\subsection{Conclusion} Merging together the above results, we obtain: 
\begin{thm}\label{prop:mainS2}
    Fix parameters $\alpha, \mu, \sigma \in (0,\infty)$ such that 
    $$
    \mu\ge 1+\sigma\sqrt{3/2} 
    \quad \text{ and }\quad 
    \alpha \le \min\left\{
    \frac{17\sigma^2}{54\phi((\mu-1)/\sigma)}, 
    \frac{C}{1+2e^{-(\mu/\sigma)^2}}, \sigma \sqrt{e/2}\right\}.
    $$
    Then $f \in \mathcal{S}_2$. 
\end{thm}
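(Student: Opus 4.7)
The plan is to assemble Propositions \ref{prop:largeregime}, \ref{prop:smallregime}, and \ref{prop:mixed} into a single statement by checking that the hypotheses of this theorem imply those of all three propositions simultaneously. The construction in \eqref{eq:definitionf} already guarantees that $f$ is continuous with $f(0)=0$, so only the pointwise inequality $\Delta_f(x,y)\ge 0$ on $\mathbb{R}^2$ needs to be verified; this reduction splits naturally along the three regions $\mathscr{A}, \mathscr{B}, \mathscr{C}$.

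First I would record the covering $\mathscr{A}\cup\mathscr{B}\cup\mathscr{C}=\mathbb{R}^2$: if $|x|\ge \nicefrac{1}{2}$ then $(x,y) \in \mathscr{A}$, while for $|x|<\nicefrac{1}{2}$ the dichotomy $2|x|+|y|\le 1$ versus $2|x|+|y|\ge 1$ places $(x,y)$ in $\mathscr{B}$ or $\mathscr{C}$ respectively. Next I would verify the three sets of hypotheses against the assumptions of the theorem. Proposition \ref{prop:largeregime} needs only the bound $\alpha\le C/(1+2e^{-(\mu/\sigma)^2})$, which is the first term inside the minimum. Proposition \ref{prop:smallregime} needs $\mu\ge 1+\sigma\sqrt{3/2}$ (precisely the $\mu$-hypothesis) together with $\alpha\le 17\sigma^2/(54\phi((\mu-1)/\sigma))$, the second term in the minimum. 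Proposition \ref{prop:mixed} needs $\mu\ge \nicefrac{1}{2}$, which is automatic from $\mu\ge 1+\sigma\sqrt{3/2}>1$, together with $\alpha\le \sigma\sqrt{e/2}$, the third term in the minimum. Thus all three propositions apply.

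Invoking them in turn gives $\Delta_f\ge 0$ on $\mathscr{A}$, on $\mathscr{B}$, and on $\mathscr{C}$, so by the covering above $\Delta_f\ge 0$ on all of $\mathbb{R}^2$, and hence $f\in \mathcal{S}_2$. There is essentially no obstacle at this stage: the engineering was front-loaded into shaping the three regional propositions so that their hypotheses combine cleanly into the minimum appearing in the theorem statement. The only mild bookkeeping point is to keep $\mu\ge 1+\sigma\sqrt{3/2}$ as the strongest $\mu$-condition (subsuming $\mu\ge 1$ and $\mu\ge \nicefrac{1}{2}$) so that a single compact hypothesis suffices.
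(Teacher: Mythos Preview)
Your proof is correct and follows exactly the paper's approach of combining Propositions \ref{prop:largeregime}, \ref{prop:smallregime}, and \ref{prop:mixed} via the covering $\mathscr{A}\cup\mathscr{B}\cup\mathscr{C}=\mathbb{R}^2$. (One cosmetic slip: you swap which of the first two terms in the minimum belongs to which proposition, but the actual inequalities you match to each proposition are the right ones.)
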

\begin{proof}
    It follows putting together Proposition \ref{prop:largeregime}, Proposition \ref{prop:smallregime}, and Proposition \ref{prop:mixed}. 
\end{proof}

This allows to complete the proof of Theorem \ref{thm:mainmatk1}.
\begin{proof}
Let $f$ be function defined in \eqref{eq:definitionf} corresponding to the values 
$$
\mu=1.2, \quad \sigma=0.05, \quad \text{ and }\quad \alpha=0.05. 
$$
Then we obtain that: 
\begin{enumerate}[itemsep=0.1cm, label={\rm (\roman*)}]
\item $\mu=1+4\sigma> 1+\sigma\sqrt{3/2}$; 
\item Since $(\mu-1)/\sigma=4$, we get
$$
\frac{17\sigma^2}{54\phi((\mu-1)/\sigma)}
=\frac{17}{54\cdot 20^2 \cdot \phi(4)}
=\frac{17\cdot e^{16}}{54\cdot 20^2 \cdot (4^3-2)}>\frac{2^4\cdot 2^{16}}{2^6\cdot 2^9 \cdot 2^6}=\frac{1}{2}>\alpha; 
$$
\item Since $\mu/\sigma=24$, we get 
$$
\frac{C}{1+2e^{-(\mu/\sigma)^2}}>\frac{\nicefrac{1}{10}}{1+2e^{-24^2}}>\frac{1}{9}>\alpha; 
$$
\item $\sigma\sqrt{e/2}>\sigma=\alpha$. 
\end{enumerate}
It follows by Theorem \ref{prop:mainS2} that $f \in \mathcal{S}_2$. 

Lastly, suppose that 
$x_\star=0.016$ and $y_\star=1.137$. Then 
$$
f(3x_\star+y_\star)-3f(x_\star)-f(y_\star)> 0.01 >0. 
$$
Therefore $f\notin \mathcal{S}_3$, concluding the proof. 
\end{proof}

\begin{rmk}
    Additional numerical examples of triples $(\mu,\sigma,\alpha)$, with a given value of $\alpha$, for which the function $f$ defined in \eqref{eq:definitionf} belongs to $\mathcal{S}_2\setminus \mathcal{S}_3$ can be found in the table below. 

    \medskip
    
\begin{center}
\begin{tabular}{@{\extracolsep{8pt}} cccccc}
\hline
  $\mu$ & $\sigma$ & $\alpha$ & $x_\star$ & $y_\star$ & $f(3x_\star+y_\star)-3f(x_\star)-f(y_\star)$ \\
\hline\hline 
  \rule{0pt}{3ex} 
  1.5 & 0.05 & 0.117783036 & 0.00675 & 1.45367 & 0.001664770 \\
  2.0 & 0.10 & 0.117783036 & 0.01050 & 1.95491 & 0.000326430 \\
  2.5 & 0.10 & 0.117783036 & 0.00900 & 2.45647 & 0.000183238 \\
  3.0 & 0.10 & 0.117783036 & 0.00750 & 2.95886 & 0.000105165 \\
  5.0 & 0.15 & 0.117783036 & 0.00750 & 4.96456 & 0.000053255 \\
  \hline
\end{tabular}
\end{center}
\end{rmk}

\section{Proof of Theorem \ref{thm:mainmatk2}}\label{sec:proof2}

The proof of Theorem \ref{thm:mainmatk2} will essentially rely on a Hamel basis construction of the vector space $\mathbb{R}$ over $\mathbb{Q}$ and piecewise linear (concave) bijections on countably many rational rays, glued with the identity elsewhere. 

Hereafter, given a subset $S\subseteq \mathbb{R}$, we write $\mathrm{span}_{\mathbb{Q}}(S)$ and $\mathrm{span}_{\mathbb{Q}_+}(S)$ for the rational span and the positive rational span, respectively. Also, $\mathbb{N}_+:=\{1,2,\ldots\}$. 
\begin{proof}
[Proof of Theorem \ref{thm:mainmatk2}]
By hypothesis, it is possible fix a $\mathbb{Q}$-linearly independent infinite set $B\subseteq (0,\infty)$ whose positive rational cone is $H$. 
Pick a countably infinite subset $B_0:=\{p_n: n \in \mathbb{N}_+\}\subseteq B$. 
%
Multiplying by suitable rationals, if necessary, we can assume without loss of generality that $0<p_n<2^{-n}$ for all $n \in \mathbb{N}_+$. For each $n \in \mathbb{N}_+$,  
define $P_n:=\mathrm{span}_{\mathbb{Q}_+}(\{p_n\})$, $P:=\bigcup_n P_n$, 
and pick $q_n \in \mathbb{Q}$ such that $1-2^{-n} < p_nq_n<1$. In particular, we have $q_n>(1-2^{-n})/p_n>2^n-1\ge 1$ for each $n\in \mathbb{N}_+$. Now, define the map $f_n: P_n\to P_n$ by 
\begin{displaymath}
\forall x \in P_n, \quad 
    f_n(x):=
    \begin{cases}
       \,q_nx\,\,\,\,& \text{ if }\, x\le p_n\\
        \,x+p_n(q_n-1) & \text{ otherwise. }
    \end{cases}
\end{displaymath}
\begin{claim}\label{claim:fnsubadditive}
    $f_n$ is a subadditive bijection on $P_n$. In addition, $f_n(x)\ge x$ for all $x \in P_n$. 
\end{claim}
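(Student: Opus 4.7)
The plan is to verify each assertion directly from the piecewise definition. First I would check that $f_n$ maps $P_n$ into itself: writing $x=rp_n$ with $r\in\mathbb{Q}_+$, one has $f_n(x)=q_n r\,p_n\in P_n$ when $r\le 1$ and $f_n(x)=(r+q_n-1)p_n\in P_n$ when $r>1$, using only that $q_n\in\mathbb{Q}$ and $q_n>1$. In both branches $f_n(x)\ge x$, which settles the secondary assertion.

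Next I would observe that the two linear pieces agree at the break point $x=p_n$, since $q_n p_n = p_n + p_n(q_n-1)$; hence $f_n$ is continuous, strictly increasing, and piecewise linear on $P_n$. Bijectivity would then follow from inverting each piece: every $y\in P_n$ with $y\le q_n p_n$ has the unique preimage $y/q_n\in(0,p_n]\cap P_n$, while every $y\in P_n$ with $y>q_n p_n$ has the unique preimage $y-p_n(q_n-1)\in(p_n,\infty)\cap P_n$; the two image sets partition $P_n$.

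The remaining task is subadditivity. Given $x,y\in P_n$, I would split into cases according to whether each of $x$, $y$, and $x+y$ lies above $p_n$. The case $x+y\le p_n$ (which forces $x,y\le p_n$) gives the equality $f_n(x+y)=q_n(x+y)=f_n(x)+f_n(y)$. When both $x,y>p_n$, the right-hand side contains the constant $2p_n(q_n-1)$ whereas the left contains only $p_n(q_n-1)$, so the inequality holds. If exactly one of $x,y$ exceeds $p_n$, the inequality collapses to $x\le q_n x$. The only case that genuinely uses the geometry is $x,y\le p_n<x+y$, where the desired inequality $(x+y)+p_n(q_n-1)\le q_n(x+y)$ rearranges as $(q_n-1)(x+y-p_n)\ge 0$, true from $q_n>1$ together with the hypothesis $x+y>p_n$.

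I anticipate no real obstacle: the definition is engineered so that the single inequality $q_n>1$ simultaneously yields $f_n\ge \mathrm{id}_{P_n}$, continuity at $p_n$, and subadditivity in the one nontrivial case, with the remaining cases reducing to bookkeeping.
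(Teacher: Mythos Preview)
Your argument is correct, but your route to subadditivity differs from the paper's. The paper observes that $f_n$ is the restriction to $P_n$ of a continuous, nonnegative, concave piecewise-linear function on $(0,\infty)$ (the segment from $(0,0)$ to $(p_n,p_nq_n)$ followed by the line of slope~$1$), and then invokes the standard fact that nonnegative concave functions are subadditive. Bijectivity is declared ``routine'' and $f_n\ge\mathrm{id}$ is read off from $q_n\ge 1$. Your proof instead verifies subadditivity by an explicit four-case split on which of $x$, $y$, $x+y$ exceed $p_n$, and handles bijectivity by explicitly writing down and checking the piecewise inverse. The paper's approach is shorter and more conceptual, relying on an external reference; yours is entirely self-contained and makes transparent exactly which inequality ($q_n>1$) is doing the work in each case. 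Both are perfectly valid here.
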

\begin{proof}
    It is immediate to see that $f_n$ is a continuous concave piecewise linear map, whose graph is the restriction on $P_n\times \mathbb{R}$ of the segment connecting $(0,0)$ and $(p_n,p_nq_n)$ and the line passing through the latter point and parallel to the main diagonal. Since $q_n \ge 1$, we have $f_n(x) \ge x$ for all $x \in P_n$. It is routine to see that $f_n$ is a bijection. Lastly, it is well known that concave nonnegative functions are subadditive, see e.g. \cite[Theorem 7.2.5]{MR423094}. 
\end{proof}


At this point, define the map $f: H\to H$ by 
\begin{displaymath}
\forall x \in H \quad 
    f(x):=
    \begin{cases}
       \,f_n(x)\,\,\,& \text{ if }\, x\in P_n \text{ for some }n \in \mathbb{N}_+\\
        \,x & \text{ otherwise. }
    \end{cases}
\end{displaymath}



We are left to show that $f$ satisfies the required properties.
\begin{claim}\label{claim:fsubadditive}
    $f$ is a subadditive bijection on $H$. 
\end{claim}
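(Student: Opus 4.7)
The plan is to exploit uniqueness of representation with respect to $B$. Since $B$ is $\mathbb{Q}$-linearly independent, every $x\in H$ has a unique expansion $x=\sum_{b\in B}q_b(x)\,b$ with finitely many positive rational coefficients $q_b(x)$, giving a well-defined support $\mathrm{supp}(x)\subseteq B$. In particular, $x\in P_n$ if and only if $\mathrm{supp}(x)=\{p_n\}$, so the $P_n$ are pairwise disjoint and $H$ decomposes as
\begin{equation*}
H=(H\setminus P)\sqcup \bigsqcup_{n\ge 1} P_n.
\end{equation*}

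Bijectivity is then immediate: by Claim~\ref{claim:fnsubadditive} each $f_n$ is a bijection $P_n\to P_n$, while $f$ is the identity on $H\setminus P$, so $f$ preserves the partition above and restricts to a bijection on each part.

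For subadditivity, the key step is the following closure claim: if $x+y\in P_k$ for some $k$, then $\mathrm{supp}(x)\cup\mathrm{supp}(y)\subseteq\{p_k\}$, and hence (since $x,y>0$) both $x$ and $y$ already lie in $P_k$. This is immediate from non-negativity of the Hamel coefficients: $q_b(x)+q_b(y)=q_b(x+y)$ must vanish for every $b\neq p_k$, forcing $q_b(x)=q_b(y)=0$ there. Contrapositively, if $x$ and $y$ belong to \emph{different} parts of the partition, then $x+y\in H\setminus P$.

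Subadditivity now follows by a four-way case split on which parts of the partition contain $x$ and $y$. If both lie in the same $P_n$, then $x+y\in P_n$ (a rational ray is closed under addition) and $f(x+y)=f_n(x+y)\le f_n(x)+f_n(y)=f(x)+f(y)$ by the subadditivity part of Claim~\ref{claim:fnsubadditive}. In each of the remaining three configurations (namely $x\in P_n$, $y\in P_m$ with $n\neq m$; one of $x,y$ in some $P_n$ and the other in $H\setminus P$; or both in $H\setminus P$), the closure claim yields $x+y\in H\setminus P$, so $f(x+y)=x+y$; the bound $f(x)+f(y)\ge x+y$ is then immediate from the pointwise inequality $f_n\ge\mathrm{id}$ on $P_n$ (Claim~\ref{claim:fnsubadditive}) combined with $f=\mathrm{id}$ on $H\setminus P$. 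The only nontrivial ingredient is the closure claim, whose proof hinges on $\mathbb{Q}$-linear independence of $B$; the rest is bookkeeping, so I do not foresee a genuine obstacle.
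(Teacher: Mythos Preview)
Your proof is correct and follows essentially the same route as the paper: both establish bijectivity part-by-part and then do the identical four-way case split, using $f_n\ge\mathrm{id}$ from Claim~\ref{claim:fnsubadditive} whenever $x+y$ falls outside $P$. Your ``closure claim'' (if $x+y\in P_k$ then $x,y\in P_k$, via nonnegativity of the Hamel coefficients) is a slightly cleaner packaging of what the paper argues more directly by inspecting supports in each case, but the content is the same.
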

\begin{proof}
Each $f_n$ is a bijection on $P_n$ thanks to Claim \ref{claim:fnsubadditive}, hence the restriction of $f$ on $P$ is a bijection. Since $f$ is the identity on $H\setminus P$, it follows that $f$ is bijection on $H$. 

    To show that $f$ is subadditive, fix $x,y \in H$. Observe that, if $y\notin P$, then its decomposition $\sum_{p \in B}r_pp$ satisfies $r_p>0$ for some $p\notin B_0$ or $r_p,r_{p^\prime}>0$ for some disinct $p,p^\prime \in B_0$. Hence, in both instances, we have $x+y\notin P$. Then we have the following cases: 
    \begin{enumerate}[itemsep=0.1cm, label={\rm (\roman*)}]
\item Suppose that $\{x,y\}\subseteq P_n$ for some $n \in \mathbb{N}_+$. Then $x+y \in P_n$, hence $f(x+y)=f_n(x+y) \le f_n(x)+f_n(y)=f(x)+f(y)$ by Claim \ref{claim:fnsubadditive}. 
\item Suppose that $x \in P_n$ and $y \in P_m$ for some distinct $n,m \in \mathbb{N}_+$. Since $\{p_n: n \in \mathbb{N}_+\}$ is linearly independent, it follows that $x+y \notin P$. It follows by Claim \ref{claim:fnsubadditive} that $f(x+y)=x+y \le f_n(x)+f_m(y)=f(x)+f(y)$. 
\item Suppose that $x \in P_n$ and $y\notin P$ for some $n \in \mathbb{N}_+$ (or viceversa). Since $x+y\notin P$, it follows by Claim \ref{claim:fnsubadditive} that $f(x+y)=x+y \le f_n(x)+f(y)=f(x)+f(y)$. 
\item Suppose that $x,y \notin P$. Then $x+y\notin P$ and $f(x+y)=x+y=f(x)+f(y)$. 
\end{enumerate}
Therefore, in all cases, we obtain $f(x+y) \le f(x)+f(y)$. 
\end{proof}

To conclude, we need to prove that $f$ satisfies \eqref{eq:claimmatk}. To this aim, fix $x\notin P$. Taking into account that $f$ is nonnegative that $\lim_n f(p/n)=\lim_np/n=0$, it follows that 
$$
\liminf_{x\to 0}f(x)=0. 
$$
At the same time, by construction the sequence $(p_n)$ satisfies $\lim_np_n=0$ and $\lim_n f(p_n)=\lim_n q_np_n=1$, hence $\limsup_{x\to 0}f(x)\ge 1$. On the other hand, we have also $\limsup_{x\to 0}f(x)\le 1$: in fact, pick $\varepsilon \in (0,1)$ and an arbitrary $x \in (0,\varepsilon)$. If $x \notin P$ then $f(x)=x<\varepsilon<1+\varepsilon$. If $x \in P_n$ for some $n \in \mathbb{N}_+$, then $f(x)=f_n(x) \le x+p_n(q_n-1)<x+1-p_n<1+\varepsilon$. Putting everything together, we obtain that 
$$
\limsup_{x\to 0}f(x)=1, 
$$
which completes the proof. 
\end{proof}

\bibliographystyle{amsplain}
\bibliography{idealee}

\end{document}